\def\neweq#1{\begin{equation}\label{#1}}
\def\endeq{\end{equation}}
\def\eq#1{(\ref{#1})}
\newtheorem{theorem}{Theorem}
\newtheorem{lemma}[theorem]{Lemma}
\begin{document}

\title[]{A measure of the torsional performances\\ of partially hinged rectangular plates}



\author[Elvise BERCHIO]{Elvise BERCHIO}
\address{\hbox{\parbox{5.7in}{\medskip\noindent{Dipartimento di Scienze Matematiche, \\
Politecnico di Torino,\\
        Corso Duca degli Abruzzi 24, 10129 Torino, Italy. \\[3pt]
        \em{E-mail address: }{\tt elvise.berchio@polito.it}}}}}
\author[Davide BUOSO]{Davide BUOSO}
\address{\hbox{\parbox{5.7in}{\medskip\noindent{Dipartimento di Scienze Matematiche, \\
Politecnico di Torino,\\
        Corso Duca degli Abruzzi 24, 10129 Torino, Italy. \\[3pt]
        \em{E-mail address: }{\tt davide.buoso@polito.it}}}}}
\author[Filippo GAZZOLA]{Filippo GAZZOLA}
\address{\hbox{\parbox{5.7in}{\medskip\noindent{Dipartimento di Matematica,\\
Politecnico di Milano,\\
   Piazza Leonardo da Vinci 32, 20133 Milano, Italy. \\[3pt]
        \em{E-mail addresses: }{\tt
          filippo.gazzola@polimi.it}}}}}





\renewcommand{\ge }{\geqslant}
\renewcommand{\geq }{\geqslant}
\renewcommand{\le }{\leqslant}
\renewcommand{\leq }{\leqslant}

\newcommand{\R}{{\mathbb R}}
\newcommand{\N}{{\mathbb N}}
\newcommand{\Z}{{\mathbb Z}}
\newcommand{\C}{{\mathbb C}}
\renewcommand{\H}{{\mathcal H}}
\newcommand{\SN}{{\mathbb S}^{N-1}}
\newcommand{\weakly}{\rightharpoonup}
\newcommand{\e }{\varepsilon}
\newcommand{\Di}{{\mathcal D}^{1,2}(\R^N)}
\newcommand{\alchi}{\raisebox{1.7pt}{$\chi$}}
\newcommand{\dive }{\mathop{\rm div}}
\newcommand{\sgn}{\mathop{\rm sgn}}

\newcommand{\HH}{\mathcal{H}}
\newcommand{\F}{\mathcal{F}}
\newcommand{\GG}{\mathcal{G}}

\newcommand{\eps}{\varepsilon}
\newcommand{\smx}{\sin(mx)}

\newcommand{\serie}{\sum_{m=1}^{+\infty}}
\newcommand{\integ}{\int_\Omega}
\newcommand{\ipi}{\int_0^\pi}
\newcommand{\il}{\int_{-\ell}^\ell}

\begin{abstract}
We introduce a new functional, named gap function, which measures the torsional instability of a partially hinged rectangular plate, aiming to model the deck of a bridge. Then we test the performances of the gap function on a plate subject to two kinds of loads: one modeled by a force concentrating on one of the free edges and one in resonance with the eigenmodes.\par
{\bf Keywords:} Plates; torsional instability; bridges.\par
{\bf AMS Subject Classification (2010):} 35J40; 74K20.
\end{abstract}

\maketitle

\allowdisplaybreaks


\section{Introduction}
From the Federal Report \cite{ammann} on the Tacoma Narrows Bridge collapse (see also \cite{scott}), we learn that the most dangerous oscillations
for the deck of a bridge are the torsional ones, which appear when the deck rotates around its main axis; we refer to \cite[\S 1.3,1.4]{book} for a historical
survey of several further collapses due to torsional oscillations. These events naturally raise the following question:

\centerline{\bf is it possible to measure the torsional performances of bridges?}

Following \cite{fergaz} we model the deck of a bridge as a long narrow rectangular thin plate $\Omega$ hinged at two opposite edges and free on
the remaining two edges; this well describes the deck of a suspension bridge which, at the short edges, is supported by the ground.
Then we introduce a new functional, named \emph{gap function}, able to measure the torsional performances of the bridge. Roughly speaking, this functional
measures the gap between the displacements of the two free edges of the deck, thereby giving a measure of the risk for the bridge to collapse. In the present paper
we explicitly compute the gap function for some prototypes of external forces that appear to be the most prone to generate torsional instability in the structure.
Our theoretical and numerical results confirm that the gap function is a reliable measure for the torsional performances of rectangular plates.

\section{Variational setting and gap function definition}\label{2}

Up to scaling, we may assume that $\Omega=(0,\pi)\times(-\ell,\ell)\subset\R^2$ with $2\ell\ll\pi$. According to the Kirchhoff-Love
theory \cite{Kirchhoff,Love}, the energy $\mathbb E$ of the vertical
deformation $u$ of the plate $\Omega$ may be computed by
{\small $$
\mathbb E(u)=\int_\Omega \left(\frac 12 (\Delta u)^2+(1-\sigma)(u_{xy}^2-u_{xx}u_{yy})-fu\right)\, dx\,dy \, ,
$$}
where $0<\sigma<1$ is the Poisson ratio. The functional $\mathbb E$ is minimized on the space $H^2_*:=\Big\{w\in H^2(\Omega);\, w=0\mbox{ on }\{0,\pi\}\times (-\ell,\ell)\Big\};$
since $\Omega\subset \R^2$, $H^2(\Omega)\subset C^0(\overline{\Omega})$ so that the condition on $\{0,\pi\}\times(-\ell,\ell)$ is satisfied pointwise.
By \cite[Lemma 4.1]{fergaz}, $H^2_*$ is a Hilbert space when endowed with the scalar product

$$(u,v)_{H^2_*}:=\int_\Omega \left[\Delta u\Delta v+(1-\sigma)(2u_{xy}v_{xy}-u_{xx}v_{yy}-u_{yy}v_{xx})\right]\, dxdy\,$$
\noindent
and associated norm $\|u\|_{H^2_*}^2=(u,u)_{H^2_*}$, which is equivalent to the usual norm in $H^2(\Omega)$.
We also define $H^{-2}_*$ the dual space of $H^2_*$
and we denote by $\langle\cdot,\cdot\rangle$ the corresponding duality.
If $f\in H^{-2}_*$ we replace $\int_\Omega fu$ with $\langle f,u\rangle$. In such case, there exists a unique $u\in H^2_*$ such that\vskip-3mm
\begin{equation} \label{lax-milg}
(u,v)_{H^2_*}=\langle f,v\rangle\qquad \forall v\in H^2_*\,,
\end{equation}
and $u$ is the minimum point of the functional $\mathbb E$.
If $f\in L^2(\Omega)$ then $u\in H^4(\Omega)$ and $u$ is a strong solution of the problem\vskip-4mm
\begin{equation} \label{eq:Poisson}
\begin{cases}
\Delta^2 u=f & \text{ in } \Omega \\
u(0,y)=u_{xx}(0,y)=u(\pi,y)=u_{xx}(\pi,y)=0 &\text{ for } y\in (-\ell,\ell) \\
u_{yy}(x,\pm\ell)+\sigma u_{xx}(x,\pm\ell)=u_{yyy}(x,\pm\ell)+(2-\sigma)u_{xxy}(x,\pm\ell)=0 & \text{ for } x\in (0,\pi)\ .\\
\end{cases}
\end{equation}
See \cite{fergaz,mansfield} for the derivation of \eq{eq:Poisson}. Differently from what happens in higher space dimension or for lower order problems,
here $u\in H^2_*$ is {\em continuous} if $f$ merely belongs to $L^1$ or $H^{-2}_*$ and we can define the \emph{gap function}:
\neweq{gap}
\GG (x):=u(x,\ell)-u(x,-\ell)\qquad x\in(0,\pi)\,.
\endeq
$\GG$ measures the difference of the vertical displacements on the free edges and is therefore a measure of the
torsional response. The maximal gap is given by
\neweq{gapinfty}
\GG^\infty:=\max_{x\in[0,\pi]}\ \big|u(x,\ell)-u(x,-\ell)\big|\,.
\endeq
This gives a measure of the risk for the bridge to collapse. Clearly, $\GG^\infty$ depends on $f$ and our purpose is to compute the torsional performances of $\Omega$ for different $f$.\par
When $f=f(x)$ the explicit solution of \eq{eq:Poisson} was obtained in \cite{fergaz} by adapting the separation of variables approach
of \cite[Section 2.2]{mansfield}. In fact, a similar procedure can be used also for some forcing terms depending on $y$ such as $e^{\alpha y}g(x)$ or $yg(x)$, see
\cite[p.41]{mansfield}. Here we focus our attention on loads of the form
{\small \neweq{f2}
f=f_\alpha(x,y)=K_\alpha e^{\alpha y}g(x)\,,\quad g\in L^2(0,\pi)\quad K_\alpha:=\frac{\alpha}{2\sinh(\alpha \ell)\, \int_0^{\pi}|g(x)|\,dx}\,.
\endeq}\!
Hence, $\|f_\alpha\|_{L^1}=1$. Furthermore, we write

\neweq{serieg}
{
g(x)=\serie\gamma_m\smx\ ,\qquad\gamma_m=\frac{2}{\pi}\ipi g(x)\smx\, dx\,.
}
\endeq
We will show in Lemma \ref{limitfalpha} that $f_{\alpha}$, with $\alpha$ big, is a good approximation of a load concentrated on a long edge of the plate. This is a good model for the restoring force due to the hangers in a suspension bridge because they act close to the free edges. We prove

\begin{theorem}\label{exponential}
Assume that $f$ satisfies \eqref{f2}-\eqref{serieg} for some $\alpha\ge0$, $\alpha\not\in\mathbb{N}$. Then the unique solution
of \eqref{eq:Poisson} is given by
{\small $$u(x,y)=K_\alpha \serie\left[\frac{\gamma_m\, e^{\alpha y}}{(m^2-\alpha^2)^2}+A\cosh(my)+B\sinh(my)+Cy\cosh(my)+Dy\sinh(my)\right]\smx$$}\!
where the constants $A=A(m,\ell,\alpha)$, $B=B(m,\ell,\alpha)$, $C=C(m,\ell,\alpha)$, $D=D(m,\ell,\alpha)$ are the solutions of the systems
\eqref{ADBCsystem} provided in Section \ref{proof Th3}.
\end{theorem}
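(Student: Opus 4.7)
The plan is to reduce the PDE \eqref{eq:Poisson} to an infinite family of ODEs by Fourier expansion in the $x$ variable. Since the boundary conditions at $x=0,\pi$ are of hinged type ($u=u_{xx}=0$), the natural basis is $\{\sin(mx)\}_{m\ge1}$, and one writes both the datum and the unknown as
$$f_\alpha(x,y)=K_\alpha e^{\alpha y}\sum_{m=1}^{+\infty}\gamma_m\sin(mx),\qquad u(x,y)=\sum_{m=1}^{+\infty}\phi_m(y)\sin(mx),$$
with $\gamma_m$ as in \eqref{serieg}. Substituting into $\Delta^2 u=f_\alpha$ and matching Fourier modes reduces the problem to the fourth-order linear ODE
$$\phi_m^{(4)}(y)-2m^2\phi_m''(y)+m^4\phi_m(y)=K_\alpha\gamma_m\,e^{\alpha y},\qquad y\in(-\ell,\ell),$$
for each $m\ge1$.

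The characteristic polynomial of the homogeneous equation factors as $(\lambda^2-m^2)^2$, with the double roots $\lambda=\pm m$; this is precisely the algebraic reason for the appearance of the four homogeneous solutions $\cosh(my)$, $\sinh(my)$, $y\cosh(my)$, $y\sinh(my)$ in the statement. A particular solution is sought in the exponential form $\phi_m^p(y)=\Lambda\, e^{\alpha y}$, which is legitimate exactly because $\alpha\notin\mathbb N$ (so $\alpha\neq m$ for every $m$, avoiding resonance with the homogeneous kernel); inserting it produces $\Lambda=K_\alpha\gamma_m/(m^2-\alpha^2)^2$, matching the first term in the claimed formula. The general solution of the ODE is then the sum $\phi_m=\phi_m^p+A\cosh(my)+B\sinh(my)+Cy\cosh(my)+Dy\sinh(my)$.

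Next I would impose the two natural boundary conditions of \eqref{eq:Poisson} along $y=\pm\ell$. After factoring out $\sin(mx)$, they translate modewise into
$$\phi_m''(\pm\ell)-\sigma m^2\phi_m(\pm\ell)=0,\qquad \phi_m'''(\pm\ell)-(2-\sigma)m^2\phi_m'(\pm\ell)=0,$$
which is a $4\times 4$ linear system in the unknowns $A,B,C,D$, depending on $m,\ell,\alpha$. This system is exactly \eqref{ADBCsystem}; the expected main obstacle is simply the bookkeeping, namely expressing the four boundary identities neatly (splitting into even/odd parts in $y$ decouples $(A,D)$ from $(B,C)$ and simplifies solvability). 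Invertibility of the resulting matrix must be verified for every $m$; this is where the hypothesis $\alpha\notin\mathbb N$, together with the hinged spectrum of the plate, ensures that no degeneracy occurs.

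Finally, uniqueness of the solution in $H^2_*$ is for free from the Lax--Milgram identity \eqref{lax-milg}, so it is enough to check that the constructed series belongs to $H^2_*$ and solves the variational problem. I would verify this by a tail estimate on $A,B,C,D$: from the linear system one reads that these constants decay exponentially in $m$ (driven by $\cosh(m\ell)^{-1}$ factors), so the $L^2$-convergence of $\gamma_m$ in \eqref{serieg} guarantees the required $H^2$-convergence of the series termwise differentiated up to order two, and the resulting $u$ solves \eqref{eq:Poisson} in the strong sense whenever $f_\alpha\in L^2(\Omega)$.
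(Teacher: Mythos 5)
Your proposal is correct and follows essentially the same route as the paper: separation of variables via the sine basis $\{\sin(mx)\}$, reducing \eqref{eq:Poisson} to the modewise ODE $\phi_m^{(4)}-2m^2\phi_m''+m^4\phi_m=K_\alpha\gamma_m e^{\alpha y}$ with a particular exponential solution and a four-parameter homogeneous solution, then imposing the free-edge conditions at $y=\pm\ell$ to get a $4\times4$ system that decouples into two $2\times2$ systems. (The paper packages the particular solution as $e^{\alpha y}\phi(x)$ with $\phi$ solving a single auxiliary ODE in $x$, then subtracts and Fourier-expands the remainder; this is just a different bookkeeping of the same computation.) One small imprecision: the hypothesis $\alpha\notin\mathbb{N}$ is needed only to avoid resonance when writing the particular solution $\Lambda e^{\alpha y}$; the invertibility of \eqref{ADBCsystem} is independent of $\alpha$ and follows from $\tfrac12\sinh(2m\ell)>m\ell$ and $0<\sigma<1$, not from any spectral hypothesis on the plate.
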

By Theorem \ref{exponential}, the gap function \eq{gap} can be computed for all $f$ satisfying \eqref{f2}-\eqref{serieg}. Here, for the sake of simplicity,
we focus on the case where the concentration occurs at the midpoint of the upper edge of the plate. Namely,
{\small \neweq{falpha}
f=f_\alpha(x,y):=\frac{\alpha\, e^{\alpha y}\sin  x}{4\, \sinh(\alpha\ell)}\, >0\mbox{ in }\Omega
\endeq}\!
so that $\|f_\alpha\|_{L^1}=1$. The unique solution $u=u_\alpha$ of \eq{eq:Poisson} with $f=f_\alpha$ as in \eq{falpha} is given by Theorem \ref{exponential} with $\gamma_1=1$ and $\gamma_m=0$ if $m\neq 1$. We set
{\small \neweq{Elalpha}
E(\ell,\alpha):=\frac{\alpha}{2(1-\alpha^2)^2}+ \frac{\alpha\, B\,\sinh \ell}{2\, \sinh(\alpha\ell)} + \frac{\alpha\, C\,\ell\,\cosh \ell}{2\, \sinh(\alpha\ell)} \,,
\endeq}
with $B=B(1,\ell,\alpha)$ and $C=C(1,\ell,\alpha)$ as in Theorem \ref{exponential}, see \eq{horrible}.
Finally, we set
{\small \neweq{El}
E(\ell):=  \frac{  \sinh^2(\ell)}{ (1-\sigma)\left[(1-\sigma)\ell +(3+\sigma)\sinh(\ell)\cosh(\ell)\right]}
\endeq}
and we prove

\begin{theorem}\label{gap_limit}
Let $u_\alpha$ be the unique solution of \eqref{eq:Poisson} with $f=f_\alpha$ as in \eqref{falpha}. Let $\GG_\alpha$ and $\GG^{\infty}_{\alpha}$ be as in \eqref{gap} and in \eqref{gapinfty} with $u=u_\alpha$. Assume \eqref{Elalpha} and \eqref{El}. Then,
{\small \neweq{Gla} \GG_\alpha\left(x\right)=E(\ell,\alpha) \,\sin x\rightarrow \overline \GG(x):=E(\ell) \,\sin x\qquad\mbox{uniformly on $[0,\pi]$ as }\alpha\to+\infty\, .
\endeq}
In particular,
{\small
\neweq{Ela}\GG^{\infty}_{\alpha}=E(\ell,\alpha)= E(\ell)-\frac{1}{\alpha}\,\frac{(1+\sigma)\cosh \ell \sinh \ell+(1-\sigma)\ell}{2 (1\!-\!\sigma)[(3\!+\!\sigma)\cosh(\ell)\sinh(\ell)+(1\!-\!\sigma)\ell]}+o\left(\frac{1}{\alpha}\right)\mbox{ as }\alpha\to+\infty\, .
\endeq}
\end{theorem}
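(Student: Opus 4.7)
The plan is to read off $\GG_\alpha$ directly from the explicit formula of Theorem~\ref{exponential} and then to perform an asymptotic analysis, as $\alpha\to+\infty$, of the coefficients $B=B(1,\ell,\alpha)$ and $C=C(1,\ell,\alpha)$ that appear in the odd-in-$y$ part of the solution.

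First I would specialize Theorem~\ref{exponential} to the present setting ($m=1$, $\gamma_1=1$, $\gamma_m=0$ for $m\ge 2$, and $K_\alpha=\alpha/(4\sinh(\alpha\ell))$), obtaining
$$u_\alpha(x,y)=K_\alpha\Bigl[\tfrac{e^{\alpha y}}{(1-\alpha^2)^2}+A\cosh y+B\sinh y+Cy\cosh y+Dy\sinh y\Bigr]\sin x.$$
Forming $u_\alpha(x,\ell)-u_\alpha(x,-\ell)$ kills the even-in-$y$ contributions ($A\cosh y$, $Dy\sinh y$, and the even part of $e^{\alpha y}$, namely $\cosh(\alpha y)$), while the odd part contributes twice. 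After multiplying by $K_\alpha$ one obtains exactly \eqref{Elalpha}, i.e.\ $\GG_\alpha(x)=E(\ell,\alpha)\sin x$, and $\GG^\infty_\alpha=|E(\ell,\alpha)|$ is attained at $x=\pi/2$. Thus \eqref{Gla}–\eqref{Ela} reduce to showing that $E(\ell,\alpha)=E(\ell)+O(1/\alpha)$ with the explicit first-order correction claimed in \eqref{Ela}.

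The core step is the asymptotic analysis of the $4\times 4$ system \eqref{ADBCsystem} coming from the free boundary conditions in \eqref{eq:Poisson} at $y=\pm\ell$. The linear operator respects the parity $y\mapsto -y$, so by decomposing the forcing term $e^{\alpha y}/(1-\alpha^2)^2$ into $\cosh(\alpha y)+\sinh(\alpha y)$ parts, the system splits into two independent $2\times 2$ systems: one for the even coefficients $(A,D)$, driven by the $\cosh(\alpha y)$ part, and one for the odd coefficients $(B,C)$, driven by the $\sinh(\alpha y)$ part. Only the latter enters $E(\ell,\alpha)$. I would divide the odd $2\times 2$ system throughout by $\sinh(\alpha\ell)$ and then rescale $B,C$ by $\alpha/\sinh(\alpha\ell)$; using $\alpha\cosh(\alpha\ell)/\sinh(\alpha\ell)\to\alpha$ and $e^{-\alpha\ell}/\sinh(\alpha\ell)=O(e^{-2\alpha\ell})$, the matrix entries stabilize to a nondegenerate limit matrix whose coefficients involve only $\sinh\ell$, $\cosh\ell$, $\ell$, and $\sigma$. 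Cramer's rule then yields $\lim_{\alpha\to\infty}\alpha B\sinh\ell/(2\sinh(\alpha\ell))$ and $\lim_{\alpha\to\infty}\alpha C\ell\cosh\ell/(2\sinh(\alpha\ell))$; since the remaining term $\alpha/(2(1-\alpha^2)^2)$ in \eqref{Elalpha} is $O(\alpha^{-3})$, their sum must, by routine hyperbolic/algebraic simplification, coincide with $E(\ell)$ of \eqref{El}. Uniform convergence on $[0,\pi]$ is then free since $\GG_\alpha-\overline{\GG}=(E(\ell,\alpha)-E(\ell))\sin x$.

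For the sharper asymptotic \eqref{Ela} I would expand the same Cramer solution one order further: write each matrix entry as the sum of its limiting value plus a remainder that is exponentially small (from $e^{-2\alpha\ell}$ terms) plus a $1/\alpha$ correction coming from the coefficients that multiplied an $\alpha$-independent quantity, invert using the Neumann expansion of the inverse determinant, and read off the $1/\alpha$-coefficient. The main obstacle is purely bookkeeping: keeping the $1/\alpha$ term clean across the two rescaled Cramer ratios and recombining them so that the numerator $(1+\sigma)\cosh\ell\sinh\ell+(1-\sigma)\ell$ and the denominator $2(1-\sigma)[(3+\sigma)\cosh\ell\sinh\ell+(1-\sigma)\ell]$ of \eqref{Ela} emerge; the exponentially small terms $O(e^{-2\alpha\ell})$ are absorbed into $o(1/\alpha)$ with room to spare.
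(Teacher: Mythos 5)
Your proposal is essentially correct and reaches the same algebraic core as the paper (the explicit formulas for $B(1,\ell,\alpha)$ and $C(1,\ell,\alpha)$ from the odd $2\times 2$ block of \eqref{ADBCsystem}, followed by the $\alpha\to\infty$ expansion and the recombination that collapses to \eqref{El}), but you take a more streamlined route for the convergence statement. The paper inserts two auxiliary lemmas: Lemma \ref{limitfalpha}, showing that $f_\alpha\to\tfrac{\sin x}{2}\,\delta_\ell(y)$ in $H^{-2}_*$, and Lemma \ref{limitfalpha2}, showing via the Lax--Milgram bound $\|u_\alpha-\hat u\|_{H^2_*}\le\|f_\alpha-\overline f\|_{H^{-2}_*}$ that $u_\alpha\to\overline u$ in $H^2_*$, and then uses the embedding $H^2_*\subset C^0(\overline\Omega)$ to deduce uniform convergence of the traces. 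You instead observe that, since $\GG_\alpha(x)-\overline\GG(x)=(E(\ell,\alpha)-E(\ell))\sin x$, uniform convergence is automatic from the convergence of the single number $E(\ell,\alpha)$; this is a legitimate shortcut that makes Lemmas \ref{limitfalpha}--\ref{limitfalpha2} unnecessary for the statement of Theorem \ref{gap_limit} itself, though it loses the structural interpretation (the limit load as a concentrated edge measure, and the $H^2_*$-stability of the solution map) that the paper presumably wanted to record. Two minor quibbles: the statement ``one for $(A,D)$ driven by the $\cosh(\alpha y)$ part, one for $(B,C)$ driven by the $\sinh(\alpha y)$ part'' is a slight oversimplification, since the two Robin-type boundary conditions flip parity, so both $\cosh(\alpha\ell)$ and $\sinh(\alpha\ell)$ appear on the right-hand side of each $2\times 2$ block (as \eqref{ADBCsystem} shows); and you write $\GG^\infty_\alpha=|E(\ell,\alpha)|$, which is the more careful statement, while the paper implicitly uses that $E(\ell,\alpha)>0$ for $\alpha$ large (which follows from $E(\ell)>0$). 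Neither affects the validity of the argument.
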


Theorem \ref{gap_limit} gives the limit value of $\GG^{\infty}_{\alpha}$ as $\alpha\to\infty$ but it does not clarify the behavior of the map $\alpha\mapsto\GG^{\infty}_{\alpha}$. Figure \ref{Ela2} supports the conjecture that this map is strictly increasing. Hence, if the same total load
approaches the free edges, then the gap function increases: this validates $\GG^\infty$ as a measure of the torsional performances.
\begin{figure}[ht]
\begin{center}
\includegraphics[width=0.49\textwidth]{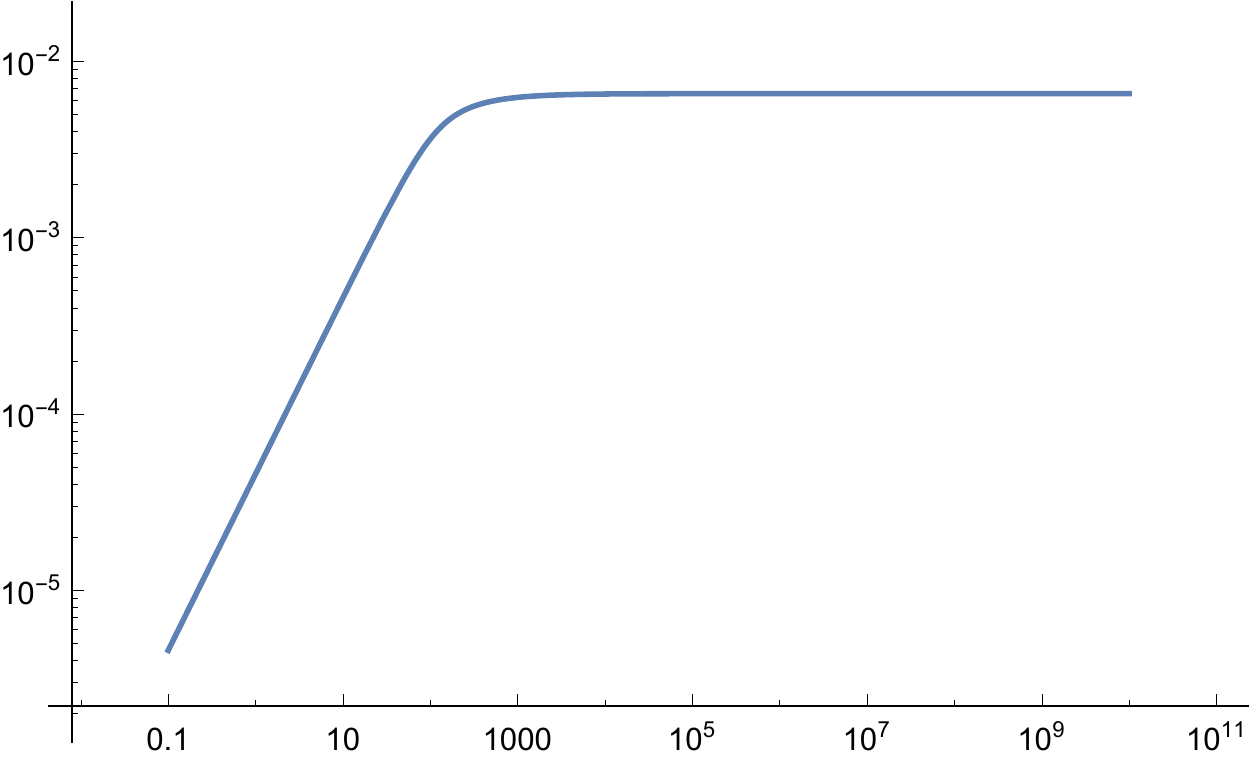}
\caption{The plot of the map $\alpha\mapsto\GG^\infty_\alpha$ in logarithmic scale for $\sigma=0.2$ and $\ell=\frac{\pi}{150}$.}
\label{Ela2}
\end{center}
\end{figure}
\vskip-4mm
A physical interesting case is when $f$ is in resonance with the structure, namely when it is a multiple of an eigenfunction of $\Delta^2$ under the boundary conditions in \eq{eq:Poisson}. Let us briefly recall some known facts from \cite{noi,fergaz}.
The eigenvalues of $\Delta^2$ under the boundary conditions in \eq{eq:Poisson} may be ordered in
an increasing sequence of strictly positive numbers diverging to $+\infty$. Furthermore, the corresponding eigenfunctions form a complete system in $H^2_*$. More precisely, they are identified by two indices $m,j\in\N_+$ and they have one of the following forms:
$$w_{m,j}(x,y)=v_{m,j}(y)\sin(mx) \,\quad \text{with corresponding eigenvalues } \nu_{m,j}\, ,$$
$$\overline w_{m,j}(x,y)=\overline v_{m,j}(y)\sin(mx)\,\quad \text{with corresponding eigenvalues } \mu_{m,j}\, .$$
The $v_{m,j}$ are odd while the $\overline v_{m,j}$ are even and, since $|y|<\ell$ small, one qualitatively has $v_{m,j}(y)\approx\alpha_{m,j}\, y$ and $\overline v_{m,j}(y)\approx\beta_{m,j}$ for
some constants $\alpha_{m,j}$ and $\beta_{m,j}$. This is why the $w_{m,j}$ are called torsional eigenfunctions while the $\overline w_{m,j}$ are called
longitudinal eigenfunctions; see \cite{noi,befega,fergaz}. Here we consider the normalized eigenfunctions
\begin{equation}\label{fmjforce}
f_{m,j}(x,y):=\frac{w_{m,j}(x,y)}{\|w_{m,j}\|_{L^1}}\,.
\end{equation}
We do not consider forcing terms proportional to the longitudinal eigenfunctions since they are even with respect to $y$ and the corresponding
gap functions vanish identically. When $f$ is as in \eq{fmjforce}, we can determine explicitly the gap function.

\begin{theorem}\label{gap_eigen}
Let $\ell>0$, $0<\sigma<1$ be such that the unique positive solution $s>0$ of
\neweq{iff}
\tanh(\sqrt{2}s\ell)=\left(\tfrac{\sigma}{2-\sigma}\right)^2\, \sqrt{2}s\ell
\endeq
is not an integer; let $m,j\in\N$. Let $u_{m,j}$ be the unique solution of \eqref{eq:Poisson} with $f=f_{m,j}$ as in \eqref{fmjforce}, let
$\GG_{m,j}$ and $\GG^{\infty}_{m,j}$ be as in \eqref{gap} and in \eqref{gapinfty} with $u=u_{m,j}$. There exist constants $C_{m,j}=C_{m,j}(\ell)>0$ such that
$\GG_{m,j}(x)=C_{m,j}\sin(mx)$, $\GG^{\infty}_{m,j}=C_{m,j}$, and
\neweq{asympt}
0<\liminf_{\ell\to0}\ \frac{C_{m,j}(\ell)}{\ell^3}\le\limsup_{\ell\to0}\ \frac{C_{m,j}(\ell)}{\ell^3}<\infty\qquad\forall m,j.
\endeq
\end{theorem}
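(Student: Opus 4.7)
The plan is to exploit that $f_{m,j}$ is proportional to an eigenfunction of $\Delta^2$, so that $u_{m,j}$ has a closed form from which $\GG_{m,j}$, $C_{m,j}$ and its $\ell$-asymptotics can be read off. Since $\Delta^2 w_{m,j}=\nu_{m,j}w_{m,j}$ and $w_{m,j}\in H^2_*$, by linearity and by the uniqueness statement in \eqref{lax-milg} the unique solution of \eqref{eq:Poisson} with $f=f_{m,j}$ is
\[
u_{m,j}(x,y)=\frac{w_{m,j}(x,y)}{\nu_{m,j}\,\|w_{m,j}\|_{L^1}}=\frac{v_{m,j}(y)\sin(mx)}{\nu_{m,j}\,\|w_{m,j}\|_{L^1}}.
\]
Since $v_{m,j}$ is odd in $y$, one gets $\GG_{m,j}(x)=\frac{2v_{m,j}(\ell)}{\nu_{m,j}\|w_{m,j}\|_{L^1}}\sin(mx)$; fixing the sign of the eigenfunction so that $v_{m,j}(\ell)>0$ I set $C_{m,j}:=\frac{2v_{m,j}(\ell)}{\nu_{m,j}\|w_{m,j}\|_{L^1}}>0$, and indeed $\GG_{m,j}(x)=C_{m,j}\sin(mx)$ and $\GG^{\infty}_{m,j}=C_{m,j}$.

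The strict positivity $C_{m,j}>0$ reduces to $v_{m,j}(\ell)\neq 0$. In the generic case $\nu_{m,j}>m^4$, the odd eigenfunction has the form $v_{m,j}(y)=b\sinh(\alpha y)+d\sin(\beta y)$ with $\alpha^2=m^2+\sqrt{\nu_{m,j}}$ and $\beta^2=\sqrt{\nu_{m,j}}-m^2$. Supposing $v_{m,j}(\ell)=0$, the free boundary condition $v_{m,j}''(\ell)=\sigma m^2 v_{m,j}(\ell)=0$ together with the identity $d\sin(\beta\ell)=-b\sinh(\alpha\ell)$ yields $b(\alpha^2+\beta^2)\sinh(\alpha\ell)=0$, which forces $b=0$ since $\alpha^2+\beta^2=2\sqrt{\nu_{m,j}}>0$; hence either $d=0$ (so $v_{m,j}\equiv 0$) or $\sin(\beta\ell)=0$, but the latter combined with the second free boundary condition would give $\beta^2=-(2-\sigma)m^2<0$, a contradiction. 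An analogous and simpler computation handles $\nu_{m,j}<m^4$, and the borderline $\nu_{m,j}=m^4$ is precisely excluded by the hypothesis $s\notin\mathbb{N}$ of \eqref{iff}, which is the compatibility relation for the degenerate basis $\{\sinh(\sqrt{2}my),y\}$ arising in that regime.

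For the asymptotic \eqref{asympt} as $\ell\to 0$ I rescale $\eta=y/\ell$, $\tilde v(\eta):=v_{m,j}(\ell\eta)$; the ODE becomes
\[
\tilde v''''-2m^2\ell^2\tilde v''+\ell^4(m^4-\nu_{m,j})\tilde v=0,
\]
with rescaled boundary conditions $\tilde v''(\pm1)=\sigma m^2\ell^2\tilde v(\pm1)$ and $\tilde v'''(\pm1)=(2-\sigma)m^2\ell^2\tilde v'(\pm1)$. The $j$-th branch of the characteristic equation $\alpha B^2\tan(\beta\ell)=A^2\beta\tanh(\alpha\ell)$, with $A=\sqrt{\nu_{m,j}}+(1-\sigma)m^2$ and $B=\sqrt{\nu_{m,j}}-(1-\sigma)m^2$, fixes the leading-order $\ell$-scaling of $\nu_{m,j}$; substituting back controls $v_{m,j}(\ell)$ via the explicit formula for the eigenfunction and $\|w_{m,j}\|_{L^1}=2\int_{-\ell}^{\ell}|v_{m,j}(y)|\,dy$ via direct integration. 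Inserting these expansions into the normalization-invariant ratio $2v_{m,j}(\ell)/(\nu_{m,j}\|w_{m,j}\|_{L^1})$ produces a leading term of order $\ell^3$ whose coefficient is bounded above and away from zero.

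The hard part is this final step: each branch of the characteristic equation lives in its own asymptotic regime, and one must carefully propagate the subleading corrections in the expansions of $\sinh$, $\sin$, $\tanh$ and $\tan$ at arguments $\alpha\ell$ and $\beta\ell$ so that the three factors combine into the same power $\ell^3$ with a nonvanishing coefficient; the nondegeneracy of that leading coefficient is guaranteed exactly by $s\notin\mathbb{N}$.
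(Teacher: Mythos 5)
Your identification of $u_{m,j}=w_{m,j}/(\nu_{m,j}\|w_{m,j}\|_{L^1})$ and the resulting formula
$C_{m,j}=\frac{2v_{m,j}(\ell)}{\nu_{m,j}\|w_{m,j}\|_{L^1}}$ matches the paper, which records it (equivalently) as
$C_{m,j}(\ell)=\frac{4}{\nu_{m,j}^{1/2}\|w_{m,j}\|_{L^1}}$ since the explicit formula \eqref{explicit} gives
$v_{m,j}(\ell)=2\nu_{m,j}^{1/2}$ directly. Your positivity argument via the boundary conditions is correct but unnecessarily elaborate: once one has \eqref{explicit}, $v_{m,j}(\ell)>0$ is immediate, and there is no ``$v_{m,j}(\ell)=0$'' case to exclude. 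Your reading of the hypothesis \eqref{iff} is also off: as $\ell\to0$ one always lands in the regime \eqref{iff1} for every fixed $m$ (because $(\sigma/(2-\sigma))^2<1$), so $s\notin\N$ is not what makes the leading coefficient in \eqref{asympt} nondegenerate; the paper explicitly says the hypothesis is a convenience (zero-probability exclusion) rather than an essential ingredient.

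The genuine gap is the asymptotic estimate \eqref{asympt}, which is the entire content of the theorem beyond the formula for $C_{m,j}$. You describe a rescaling strategy ($\eta=y/\ell$, a free-free beam limit problem, and propagation of corrections in $\sinh,\sin,\tanh,\tan$), but you do not carry out any of it, and you yourself flag it as ``the hard part.'' Without the actual estimates, the statement that the ratio lands at order $\ell^3$ is an assertion, not a proof. The paper's route is different and concrete: it first proves the eigenvalue bracketing
\[
\nu_{m,j}^{1/2}\in\Big(m^2+\tfrac{\pi^2}{\ell^2}(j-1)^2,\ m^2+\tfrac{\pi^2}{\ell^2}(j-\tfrac12)^2\Big),
\]
then uses the explicit eigenfunction \eqref{explicit} to bound $\|w_{m,j}\|_{L^1}$ from above and below (restricting the $L^1$ integral to the first nodal interval $(0,\gamma)$, and using a uniform lower bound on $|\sin(\ell\sqrt{\nu_{m,j}^{1/2}-m^2})|$ obtained by contradiction from the characteristic equation \eqref{zeroeq}), arriving at two-sided bounds $\nu_{m,j}^{1/2}\|w_{m,j}\|_{L^1}\asymp\ell^{-3}$. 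These quantitative steps are what you would need to supply; the rescaling picture, while not wrong, does not by itself yield uniform two-sided bounds in $\ell$, and in fact leads you into a subtlety your sketch does not confront: for $j=1$ the limiting rescaled problem is the zero-eigenvalue rigid-body mode of the free-free beam, which places $\nu_{m,1}$ on a different scale ($\nu_{m,1}^{1/2}\sim\ell^{-1}$ rather than $\ell^{-2}$), so that branch must be treated by a separate argument, as the paper does.
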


The reason of \eq{iff} will become clear in Section \ref{pttt} where we we give the explicit dependence $C_{m,j}=C_{m,j}(\ell)$, see \eq{Cmj}. Condition \eq{iff} has probability 0 to occur among all possible random choices of $\ell$
and $\sigma$: if it is violated, Theorem \ref{gap_eigen} still holds with a slight modification of the proof. Even if the convergence $\GG^{\infty}_{m,j}(\ell)\to0$ as $\ell\to0$ appears reasonable, a surprise arises from \eq{asympt} which states that
the convergence is at the {\em third} power: this is due to the behavior of the torsional eigenvalues. Take again $\sigma=0.2$ and $\ell=\frac{\pi}{150}$
(two reasonable values for plates modeling the deck of a bridge), by using \eq{Cmj}, we numerically obtained the values in Table \ref{Gmj} below.

\begin{table}[ht]
\begin{center}
\begin{tabular}{|c|c|c|c|c|c|c|c|c|c|c|c|c|}
\hline
$j\backslash m $&$1$&$2$&$3$&$4$&$5$\\
\hline
$1$ &$4.3629\times10^{-3}$&$1.0904\cdot10^{-3}$&$4.8439\cdot10^{-4}$&$2.7229\cdot10^{-4}$&$1.7411\cdot10^{-4}$\\
\hline
$2$ &$4.3566\cdot10^{-8}$&$4.3555\cdot10^{-8}$&$4.3536\cdot10^{-8}$&$4.3509\cdot10^{-8}$&$4.3474\cdot10^{-8}$\\
\hline
$3$ &$4.1216\cdot10^{-9}$&$4.1214\cdot10^{-9}$&$4.1209\cdot10^{-9}$&$4.1203\cdot10^{-9}$&$4.1195\cdot10^{-9}$\\
\hline
$4$ &$9.4439\cdot10^{-10}$&$9.4436\cdot10^{-10}$&$9.4432\cdot10^{-10}$&$9.4426\cdot10^{-10}$&$9.4418\cdot10^{-10}$\\
\hline
$5$ &$3.2251\cdot10^{-10}$&$3.2250\cdot10^{-10}$&$3.2249\cdot10^{-10}$&$3.2248\cdot10^{-10}$&$3.2247\cdot10^{-10}$\\
\hline
\end{tabular}
\par\smallskip
\caption{Numerical values of $\GG^{\infty}_{m,j}(\ell)$ when $\ell=\pi/150$ and $\sigma=0.2$.}\label{Gmj}
\end{center}
\end{table}\vskip-5mm
\noindent
Due to the fact that $w_{1,1}$ is the only torsional eigenfunction with two nodal regions, one expects from a reliable
measure $\GG^\infty$ of the torsional performances of $\Omega$ to satisfy
$$
\max_{m,j\in \N_+}\,\GG^\infty_{m,j}=\GG^\infty_{1,1}\, .
$$
Table \ref{Gmj} supports this conjecture and shows that the maps $m\mapsto\GG^{\infty}_{m,j}$ and $j\mapsto\GG^{\infty}_{m,j}$ are decreasing.
Note that $\GG^{\infty}_{1,1}\approx0.0043<E(\pi/150)\approx0.0065$ which is the value in \eq{Ela} when $\alpha\to+\infty$. Nevertheless, the maximal gap of $f_{1,1}$ is the worst among linear combinations of $f_{m,j}$, namely when a finite number of modes is excited. 


\begin{theorem}\label{particolare}
Let $M\subset\N^+$ be a finite subset of indices and let $m_o=\min\{m\in M\}$.
Let $\overline{u}$ be the unique solution of \eqref{eq:Poisson} with $f(x,y)=\sum_{m\in M}\alpha_m f_{m,j(m)}(x,y)$, where the $f_{m,j}$ are
defined in \eqref{fmjforce} and the $\alpha_m\in\R$ satisfy $\alpha_m\neq0$ and $\sum_{m\in M}|\alpha_{m}|\le1$. Hence, $\|f\|_{L^1}\leq 1$.
Furthermore, assume that the $j=j(m)\in \N$ are such that the map $m\mapsto C_{m,j(m)}$ is nonincreasing with the $ C_{m,j}$ as in Theorem
\ref{gap_eigen}, see also \eqref{Cmj} below. Finally, let $\GG^{\infty}$ be as in \eqref{gapinfty} with $u=\overline{u}$; then
$$\max_{\alpha_m}\GG^\infty=C_{m_o,j(m_o)}$$
where the maximum is taken among all the $\alpha_m$ satisfying the above restrictions.
\end{theorem}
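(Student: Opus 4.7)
The plan is to exploit linearity of problem \eqref{eq:Poisson} together with the explicit one-mode structure of the gap functions $\GG_{m,j}$ given by Theorem \ref{gap_eigen}, and then to apply a triangle inequality combined with the monotonicity assumption on the map $m\mapsto C_{m,j(m)}$. The normalization $\|f_{m,j}\|_{L^1}=1$ from \eqref{fmjforce} together with $\sum_{m\in M}|\alpha_m|\le 1$ guarantees that $\|f\|_{L^1}\le 1$, as already noted in the statement.

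First, by linearity of \eqref{eq:Poisson}, the solution $\overline{u}$ corresponding to $f=\sum_{m\in M}\alpha_m f_{m,j(m)}$ decomposes as $\overline{u}=\sum_{m\in M}\alpha_m u_{m,j(m)}$, where each $u_{m,j}$ solves \eqref{eq:Poisson} with right-hand side $f_{m,j}$. Hence, invoking Theorem \ref{gap_eigen}, the associated gap function reads
\begin{equation*}
\GG(x)=\sum_{m\in M}\alpha_m\bigl(u_{m,j(m)}(x,\ell)-u_{m,j(m)}(x,-\ell)\bigr)=\sum_{m\in M}\alpha_m\, C_{m,j(m)}\sin(mx).
\end{equation*}

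Next, I extract the upper bound via the triangle inequality combined with $|\sin(mx)|\le 1$:
\begin{equation*}
|\GG(x)|\le\sum_{m\in M}|\alpha_m|\,C_{m,j(m)}\le C_{m_o,j(m_o)}\sum_{m\in M}|\alpha_m|\le C_{m_o,j(m_o)},
\end{equation*}
where the middle step uses the hypothesis $C_{m,j(m)}\le C_{m_o,j(m_o)}$ for every $m\in M$ (because $m\ge m_o$ and the map is nonincreasing), and the last uses $\sum_{m\in M}|\alpha_m|\le 1$. Taking the maximum over $x\in[0,\pi]$ yields $\GG^\infty\le C_{m_o,j(m_o)}$ for every admissible choice of the $\alpha_m$.

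Finally, to see that this bound is sharp I exhibit a maximizing configuration approached in the admissible set. Evaluate $\GG$ at $x_o=\pi/(2m_o)$, for which $\sin(m_o x_o)=1$, and, for small $\eta>0$, choose $\alpha_{m_o}=1-(|M|-1)\eta$ and $\alpha_m=\eta$ for $m\in M\setminus\{m_o\}$; this respects both constraints $\alpha_m\neq 0$ and $\sum|\alpha_m|\le 1$, and gives
\begin{equation*}
\GG(x_o)=\bigl(1-(|M|-1)\eta\bigr)C_{m_o,j(m_o)}+\eta\sum_{m\in M\setminus\{m_o\}}C_{m,j(m)}\sin(mx_o)\longrightarrow C_{m_o,j(m_o)}
\end{equation*}
as $\eta\to 0^+$. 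Combined with the upper bound, this proves $\max_{\alpha_m}\GG^\infty=C_{m_o,j(m_o)}$. There is no genuine obstacle here: the only delicate point is the bookkeeping around the constraint $\alpha_m\neq 0$, which forces the equality to be read as a supremum that is attained in the closure of the admissible set (and realized exactly by the formal choice $\alpha_{m_o}=1$, $\alpha_m=0$ for $m\neq m_o$).
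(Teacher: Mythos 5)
Your proof is correct and takes essentially the same route as the paper: linearity, the explicit formula $\GG(x)=\sum_{m\in M}\alpha_m C_{m,j(m)}\sin(mx)$, and the triangle inequality combined with the monotonicity hypothesis to obtain $\GG^\infty\le C_{m_o,j(m_o)}$, then a (near-)degenerate choice of coefficients to realize the bound. The only difference is cosmetic: where you bound directly via $\sum_{m\in M}|\alpha_m|\,C_{m,j(m)}\le C_{m_o,j(m_o)}\sum_{m\in M}|\alpha_m|\le C_{m_o,j(m_o)}$, the paper appeals to an auxiliary induction lemma on maximizing a linear form over a simplex; and your closing observation that for $|M|>1$ the constraint $\alpha_m\neq0$ turns the maximum into a supremum attained only in the closure of the admissible set is an honest refinement of a point the paper also glosses over.
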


Concerning the existence of a set of indices $\{j(m)\}$ such that the map $m\mapsto C_{m,j(m)}$ is nondecreasing, as required by Theorem \ref{particolare},
Table \ref{Gmj} suggests that this assumption should hold by taking $j(m)= j_0$ for some positive integer $j_0$ fixed.

\section{Proof of Theorem \ref{exponential}}\label{proof Th3}

By linearity, we may take $K_\alpha=1$. Let $\phi\in H^4(0,\pi)$ be the unique solution of
{\small \neweq{equphi}
\left\{\begin{array}{ll}
\phi''''(x)+2\alpha^2\phi''(x)+\alpha^4\phi(x)=g(x)\qquad(0<x<\pi)\\
\phi(0)=\phi(\pi)=\phi''(0)=\phi''(\pi)=0\ .
\end{array}\right.
\endeq}
For $\alpha\not\in\mathbb{N}$, by \eq{serieg} we get $\phi(x)=\serie\frac{\gamma_m}{(m^2-\alpha^2)^2}\smx$, $\phi''\in H^2(0,\pi)$ and the corresponding series converges in $H^2(0,\pi)$ and uniformly. By \eq{equphi}, $\Delta^2[e^{\alpha y}\phi(x)]=e^{\alpha y}g(x)$. Therefore, $v(x,y):=u(x,y)-e^{\alpha y}\phi(x)$, with $u$ solving \eq{eq:Poisson}, satisfies
{\small \neweq{pbv}
\left\{\begin{array}{ll}
\Delta^2v=0 & \mbox{ in }\Omega\\
v=v_{xx}=0 & \mbox{ on }\{0,\pi\}\times(-\ell,\ell)\\
v_{yy}+\sigma v_{xx}=-e^{\pm\alpha\ell}[\alpha^2\phi(x)+\sigma\phi''(x)] & \mbox{ on }(0,\pi)\times\{\pm\ell\}\\
v_{yyy}+(2-\sigma)v_{xxy}=-\alpha e^{\pm\alpha\ell}[\alpha^2\phi(x)+(2-\sigma)\phi''(x)] & \mbox{ on }(0,\pi)\times\{\pm\ell\}\ .
\end{array}\right.
\endeq}

We seek functions $Y_m(y)$ such that $v(x,y)=\serie Y_m(y)\smx$
solves \eq{pbv}. Then $Y_m''''(y)-2m^2Y_m''(y)+m^4Y_m(y)=0$ for $y\in(-\ell,\ell)$, hence $Y_m(y)=A\cosh(my)+B\sinh(my)+Cy\cosh(my)+Dy\sinh(my)$.
By imposing the boundary conditions in \eq{pbv}, we get a $4\times4$ system
that may be split into the two $2\times2$ systems:
{\small \neweq{ADBCsystem}
\left\{\begin{array}{ll}
\!\!\!\!(1\!-\!\sigma)m^2\cosh(m\ell) A+m[2\cosh(m\ell)+(1\!-\!\sigma)m\ell\sinh(m\ell)] D\!=\!
\gamma_m \frac{\sigma m^2\!-\!\alpha^2}{(m^2\!-\!\alpha^2)^2} \cosh(\alpha\ell)\\
\!\!\!\!(\sigma\!-\!1)m^3\sinh(m\ell) A+m^2[(1\!+\!\sigma)\sinh(m\ell)+(\sigma\!-\!1)m\ell\cosh(m\ell)] D\!=\!
\alpha \gamma_m \frac{(2\!-\!\sigma)m^2\!-\!\alpha^2}{(m^2\!-\!\alpha^2)^2} \sinh(\alpha\ell) ,
\end{array}\right.
\endeq
$$
\left\{\begin{array}{ll}
\!\!\!\!(1\!-\!\sigma)m^2\sinh(m\ell) B+m[2\sinh(m\ell)+(1\!-\!\sigma)m\ell\cosh(m\ell)] C\!=\!
\gamma_m \frac{\sigma m^2\!-\!\alpha^2}{(m^2\!-\!\alpha^2)^2} \sinh(\alpha\ell)\\
\!\!\!\!(\sigma\!-\!1)m^3\cosh(m\ell) B+m^2[(1\!+\!\sigma)\cosh(m\ell)+(\sigma\!-\!1)m\ell\sinh(m\ell)] C\!=\!
\alpha \gamma_m \frac{(2\!-\!\sigma)m^2\!-\!\alpha^2}{(m^2\!-\!\alpha^2)^2} \cosh(\alpha\ell) .
\end{array}\right.
$$
}\!
The proof of Theorem \ref{exponential} is so complete.\endproof

\section{Proof of Theorem \ref{gap_limit}}\label{senza rinforzo}
When $f$ is as in \eq{falpha} we have $\gamma_m=0$ for all $m\ge2$ and the systems \eqref{ADBCsystem} yield
{\small $$A(m,\ell,\alpha)=B(m,\ell,\alpha)=C(m,\ell,\alpha)=D(m,\ell,\alpha)=0\qquad\forall m\ge2$$}
while, for $m=1$, $\gamma_1=1$ and by solving \eqref{ADBCsystem} we get
{\small
\begin{eqnarray*}
A&=&\tfrac{(1\!+\!\sigma)(\sigma\!-\!\alpha^2)\sinh(\ell)\cosh(\alpha\ell)+(1\!-\!\sigma)(\alpha^2\!-\!\sigma)\ell\cosh(\ell)\cosh(\alpha\ell)}
{(1\!-\!\sigma)(\alpha^2\!-\!1)^2[(3\!+\!\sigma)\cosh(\ell)\sinh(\ell)-(1\!-\!\sigma)\ell]}\\
\ &\ &+\tfrac{2\alpha(\alpha^2\!+\!\sigma\!-\!2)\cosh(\ell)\sinh(\alpha\ell)+(1\!-\!\sigma)\alpha(\alpha^2\!+\!\sigma\!-\!2)\ell\sinh(\ell)\sinh(\alpha\ell)}
{(1\!-\!\sigma)(\alpha^2\!-\!1)^2[(3\!+\!\sigma)\cosh(\ell)\sinh(\ell)-(1\!-\!\sigma)\ell]}
\end{eqnarray*}\vskip-4mm
\begin{eqnarray}
B&=&\tfrac{(1\!+\!\sigma)(\sigma\!-\!\alpha^2)\cosh(\ell)\sinh(\alpha\ell)+(1\!-\!\sigma)(\alpha^2\!-\!\sigma)\ell\sinh(\ell)\sinh(\alpha\ell)}
{(1\!-\!\sigma)(\alpha^2\!-\!1)^2[(3\!+\!\sigma)\cosh(\ell)\sinh(\ell)+(1\!-\!\sigma)\ell]} \notag\\
\ &\ &+\tfrac{2\alpha(\alpha^2\!+\!\sigma\!-\!2)\sinh(\ell)\cosh(\alpha\ell)+(1\!-\!\sigma)\alpha(\alpha^2\!+\!\sigma\!-\!2)\ell\cosh(\ell)\cosh(\alpha\ell)}
{(1\!-\!\sigma)(\alpha^2\!-\!1)^2[(3\!+\!\sigma)\cosh(\ell)\sinh(\ell)+(1\!-\!\sigma)\ell]} \label{horrible}
\end{eqnarray}
$$C\!=\!\tfrac{\alpha(2\!-\!\sigma\!-\!\alpha^2)\sinh(\ell)\cosh(\alpha\ell)+(\sigma\!-\!\alpha^2)\cosh(\ell)\sinh(\alpha\ell)}
{(\alpha^2\!-\!1)^2[(3\!+\!\sigma)\cosh(\ell)\sinh(\ell)+(1\!-\!\sigma)\ell]},\,D\!=
\!\tfrac{\alpha(2\!-\!\sigma\!-\!\alpha^2)\cosh(\ell)\sinh(\alpha\ell)+(\sigma\!-\!\alpha^2)\sinh(\ell)\cosh(\alpha\ell)}
{(\alpha^2\!-\!1)^2[(3\!+\!\sigma)\cosh(\ell)\sinh(\ell)-(1\!-\!\sigma)\ell]}\, ,
$$}\!
and the explicit form of $u_\alpha$ follows from Theorem \ref{exponential}. In particular, the corresponding gap function $\GG_{\alpha}$ is as in \eq{Gla}.
Hence, $\GG^{\infty}_{\alpha}=E(\ell,\alpha)$, with $E(\ell,\alpha)$ is as in \eq{Elalpha}.\par
In order to compute the limit of $\GG^{\infty}_{\alpha}$ as $\alpha\to+\infty$, we prove

\begin{lemma}\label{limitfalpha}
As $\alpha\to+\infty$ we have that $f_\alpha(x,y)\to\frac{\sin x}{2}\cdot\delta_\ell(y)$ in $H^{-2}_*$
that is,
{\small \neweq{distrib}
\lim_{\alpha\to\infty}\int_\Omega f_\alpha(x,y)v(x,y)\, dxdy=\frac12\int_0^\pi\sin x\, v(x,\ell)\, dx\quad\forall v\in H^2_*\, .
\endeq}\!
\end{lemma}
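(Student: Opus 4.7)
The plan is to rewrite $f_\alpha$ as a tensor product with a one-parameter family of probability densities in the $y$-variable that concentrate at $y=\ell$, and then reduce \eqref{distrib} to a standard Dirac-approximation argument exploiting the continuity of $v$ on $\overline\Omega$.

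First I would factor $f_\alpha(x,y)=\tfrac{\sin x}{2}\,\rho_\alpha(y)$, where
$$\rho_\alpha(y):=\frac{\alpha\,e^{\alpha y}}{2\sinh(\alpha\ell)},\qquad y\in[-\ell,\ell],$$
and observe that $\int_{-\ell}^{\ell}\rho_\alpha(y)\,dy=1$, so that $\rho_\alpha$ is a probability density on $(-\ell,\ell)$. By Fubini,
$$\int_\Omega f_\alpha(x,y)v(x,y)\,dxdy=\tfrac{1}{2}\int_0^\pi \sin x\,\Bigl(\int_{-\ell}^{\ell}\rho_\alpha(y)\,v(x,y)\,dy\Bigr)dx.$$
Therefore the claim reduces to proving that the inner integral converges to $v(x,\ell)$, uniformly in $x\in[0,\pi]$.

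Next I would use the embedding $H^2(\Omega)\subset C^0(\overline\Omega)$ (valid in dimension two) to conclude that every $v\in H^2_*$ is \emph{uniformly continuous} and bounded on $\overline\Omega$. Given $\varepsilon>0$, pick $\delta\in(0,2\ell)$ such that $|v(x,y)-v(x,\ell)|<\varepsilon$ whenever $\ell-\delta\le y\le\ell$ and $x\in[0,\pi]$. Using $\int_{-\ell}^\ell\rho_\alpha=1$, write
$$\int_{-\ell}^{\ell}\rho_\alpha(y)\bigl(v(x,y)-v(x,\ell)\bigr)dy=\int_{\ell-\delta}^{\ell}+\int_{-\ell}^{\ell-\delta}.$$
The first integral is bounded in absolute value by $\varepsilon$; the second is bounded by $2\|v\|_{L^\infty(\Omega)}\int_{-\ell}^{\ell-\delta}\rho_\alpha(y)\,dy$. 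A direct computation gives
$$\int_{-\ell}^{\ell-\delta}\rho_\alpha(y)\,dy=\frac{e^{\alpha(\ell-\delta)}-e^{-\alpha\ell}}{2\sinh(\alpha\ell)}=O\bigl(e^{-\alpha\delta}\bigr)\longrightarrow 0\quad\text{as }\alpha\to+\infty,$$
since $\sinh(\alpha\ell)\sim \tfrac12 e^{\alpha\ell}$. Thus, for all sufficiently large $\alpha$,
$$\sup_{x\in[0,\pi]}\Bigl|\int_{-\ell}^{\ell}\rho_\alpha(y)\,v(x,y)\,dy-v(x,\ell)\Bigr|\le 2\varepsilon.$$

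Finally, inserting this uniform convergence into the outer $x$-integral and using $|\sin x|\le 1$ on $[0,\pi]$ yields \eqref{distrib}. The only subtlety is the exponential concentration rate of $\rho_\alpha$ near $y=\ell$, but this follows from the explicit asymptotics of $\sinh(\alpha\ell)$; no delicate estimate on $v$ is needed beyond the Sobolev embedding, so I do not expect a genuine obstacle in the argument.
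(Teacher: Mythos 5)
Your proof is correct, but it follows a genuinely different route from the paper's. The authors instead integrate by parts in $y$, writing
$\int_\Omega f_\alpha v=\frac{1}{4\sinh(\alpha\ell)}\int_0^\pi\sin x\big(\big[e^{\alpha y}v\big]_{-\ell}^\ell-\int_{-\ell}^\ell e^{\alpha y}v_y\,dy\big)dx$,
and then kill the remainder term via dominated convergence, noting that the kernel $e^{\alpha y}/(4\sinh(\alpha\ell))$ is uniformly bounded and tends to zero pointwise on $[-\ell,\ell)$ while $v_y\in H^1(\Omega)$ is integrable. Your argument instead factors $f_\alpha=\tfrac{\sin x}{2}\,\rho_\alpha(y)$ with $\rho_\alpha$ a probability density concentrating exponentially at $y=\ell$, and runs the standard approximate-identity estimate using only the uniform continuity of $v$ on $\overline\Omega$ (from the two-dimensional embedding $H^2\subset C^0$), without ever touching $v_y$. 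This is more elementary and self-contained, and it actually yields slightly more than the paper states, namely that $\int_{-\ell}^{\ell}\rho_\alpha(y)v(x,y)\,dy\to v(x,\ell)$ \emph{uniformly} in $x\in[0,\pi]$, whereas the paper's pointwise/Lebesgue argument only gives the integrated limit. The paper's route is faster to write down; yours is cleaner conceptually and would generalize verbatim to any $g\in L^1(0,\pi)$ in place of $\sin x$.
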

\begin{proof} Take $v \in H^2_*$, integrating by parts, we have
{\small \begin{eqnarray*}
\int_\Omega f_\alpha(x,y)v(x,y)\, dxdy
= \frac{1}{4\, \sinh(\alpha\ell)}\ipi\sin x\left(\big[e^{\alpha y}v(x,y)\big]_{-\ell}^\ell
-\il e^{\alpha y}v_y(x,y)\, dy\right)\, dx\, .
\end{eqnarray*}}\!
By Lebesgue Theorem, $\lim_{\alpha\to\infty}\, \frac{1}{4\, \sinh(\alpha\ell)}\il e^{\alpha y}v_y(x,y)\, dy=0$. Since, for all $x\in(0,\pi)$, we have that $\lim_{\alpha\to\infty}\, \frac{\big[e^{\alpha y}v(x,y)\big]_{-\ell}^\ell}{4\, \sinh(\alpha\ell)}=\frac{v(x,\ell)}{2}$, hence \eq{distrib} follows.\quad$\Box$
\end{proof}

By letting $\alpha\to+\infty$, the constants in \eq{horrible} have the following asymptotic behavior:
{\small $$A\sim\frac{2\cosh(\ell)+(1-\sigma)\ell\sinh(\ell)}{(1\!-\!\sigma)[(3\!+\!\sigma)\cosh(\ell)\sinh(\ell)-(1\!-\!\sigma)\ell]}\,
\frac{e^{\alpha\ell}}{2\alpha}=:\overline A\,\frac{e^{\alpha\ell}}{\alpha}\, ,$$
$$B\sim\frac{2\sinh(\ell)+(1-\sigma)\ell\cosh(\ell)}{(1\!-\!\sigma)[(3\!+\!\sigma)\cosh(\ell)\sinh(\ell)+(1\!-\!\sigma)\ell]}\,
\frac{e^{\alpha\ell}}{2\alpha}=:\overline B\,\frac{e^{\alpha\ell}}{\alpha}\, ,$$
$$C\sim-\frac{\sinh(\ell)}{(3\!+\!\sigma)\cosh(\ell)\sinh(\ell)+(1\!-\!\sigma)\ell}\, \frac{e^{\alpha\ell}}{2\alpha}=:-\overline C\,\frac{e^{\alpha\ell}}{\alpha}\, ,$$
$$D\sim-\frac{\cosh(\ell)}{(3\!+\!\sigma)\cosh(\ell)\sinh(\ell)-(1\!-\!\sigma)\ell}\, \frac{e^{\alpha\ell}}{2\alpha}=:-\overline D\,\frac{e^{\alpha\ell}}{\alpha}\, .$$}
Set $\overline u\left(x,y\right):=\left[\overline A\cosh(y)+\overline B\sinh(y)-\overline Cy\cosh(y)-\overline Dy\sinh(y)\right]\frac{\sin x}{2}$. Clearly, one has $u_\alpha(x,y)\to\overline u\left(x,y\right)$ a.e. in $\Omega$ as $\alpha\to+\infty$. Moreover, we show

\begin{lemma}\label{limitfalpha2}
As $\alpha\to+\infty$ we have that $$u_\alpha(x,y)\to\overline u\left(x,y\right)\quad\mbox{ in }H_*^2\,.$$
\end{lemma}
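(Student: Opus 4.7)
My plan is to exploit the fact that both $u_\alpha$ and $\overline{u}$ have the separated form $Y(y)\sin x$, so that the $H^2_*$-norm of the difference reduces to a one-dimensional quadratic form in $Y_\alpha - \overline{Y}$. A direct expansion of the scalar product gives, for any $Y \in H^2(-\ell,\ell)$,
\[
\|Y(y)\sin x\|_{H^2_*}^2 = \frac{\pi}{2}\int_{-\ell}^\ell\bigl[(Y''-Y)^2 + 2(1-\sigma)\bigl((Y')^2 + YY''\bigr)\bigr]dy,
\]
which is bounded above by a constant multiple of $\|Y\|_{H^2(-\ell,\ell)}^2$. Consequently it suffices to prove that $Y_\alpha \to \overline Y$ in $H^2(-\ell,\ell)$; in fact I would aim for the stronger uniform $C^2$-convergence on the compact interval $[-\ell,\ell]$.

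Next I would use the explicit expression coming from Theorem \ref{exponential} applied with $\gamma_1 = 1$, $\gamma_m = 0$ for $m \geq 2$, and $K_\alpha = \alpha/(4\sinh(\alpha\ell))$, namely
\[
Y_\alpha(y) = K_\alpha\,\frac{e^{\alpha y}}{(1-\alpha^2)^2} + (K_\alpha A)\cosh y + (K_\alpha B)\sinh y + (K_\alpha C)\,y\cosh y + (K_\alpha D)\,y\sinh y,
\]
with $A,B,C,D$ as in \eqref{horrible}, and split the analysis into the particular-solution block and the homogeneous block.

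For the particular-solution block, each $y$-differentiation produces a factor of $\alpha$. The elementary asymptotics $K_\alpha e^{\alpha\ell}\sim\alpha/2$ together with the bound $e^{\alpha y}\leq e^{\alpha\ell}$ on $[-\ell,\ell]$ yield that the $k$-th derivative of this block is uniformly $O(\alpha^{k-3})$ for $k=0,1,2$, so the block vanishes in $C^2([-\ell,\ell])$. For the homogeneous block, the asymptotic formulas for $A,B,C,D$ displayed just before the statement, combined with $K_\alpha\sim\alpha/(2e^{\alpha\ell})$, give $K_\alpha A\to\overline A/2$, $K_\alpha B\to\overline B/2$, $K_\alpha C\to -\overline C/2$ and $K_\alpha D\to -\overline D/2$. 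Since $\cosh y$, $\sinh y$, $y\cosh y$, $y\sinh y$ are smooth fixed functions on $[-\ell,\ell]$, convergence of these scalars upgrades to $C^2$-convergence of the whole block to the function $\overline Y(y)$ defined by $\overline u = \overline Y(y)\sin x$. Summing the two blocks gives $Y_\alpha\to\overline Y$ in $C^2([-\ell,\ell])$, and the first paragraph then closes the argument.

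No genuinely hard step is involved; the only care required is the bookkeeping of the cancellation between the factor $K_\alpha$, which is of order $\alpha\, e^{-\alpha\ell}$, and the factors of order $\alpha^{-1}e^{\alpha\ell}$ appearing both in the coefficients listed in \eqref{horrible} and in the numerator of the particular-solution term. Any slip in these competing orders would destroy one of the limits, so that is where I would concentrate the verification.
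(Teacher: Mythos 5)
Your proof is correct but follows a genuinely different route from the paper's. The paper argues abstractly: it invokes Lemma~\ref{limitfalpha} to get $f_\alpha\to\overline f=\tfrac{\sin x}{2}\delta_\ell(y)$ in $H^{-2}_*$, then uses the Riesz isometry $\|u_\alpha-\hat u\|_{H^2_*}\le\|f_\alpha-\overline f\|_{H^{-2}_*}$ built into the variational formulation \eqref{lax-milg}, and finally identifies the abstract limit $\hat u$ with $\overline u$ via pointwise convergence. Your approach is instead fully explicit: you reduce the $H^2_*$-norm of a separated function $Y(y)\sin x$ to a one-dimensional quadratic form in $Y$ (your displayed identity is correct — the $\sin^2 x$ and $\cos^2 x$ integrals each contribute $\pi/2$), and then show $Y_\alpha\to\overline Y$ in $C^2([-\ell,\ell])$ directly from the asymptotics of $A,B,C,D$ in \eqref{horrible} and the estimate $K_\alpha e^{\alpha\ell}\sim\alpha/2$, splitting into the particular block, whose $k$-th derivative is $O(\alpha^{k-3})$, and the homogeneous block, whose coefficients $K_\alpha A\to\overline A/2$, $K_\alpha B\to\overline B/2$, $K_\alpha C\to-\overline C/2$, $K_\alpha D\to-\overline D/2$ multiply fixed smooth basis functions. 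What you buy with the direct computation is transparency and a stronger conclusion ($C^2$-convergence, not just $H^2$), and you sidestep the one delicate point in the paper's argument — namely that Lemma~\ref{limitfalpha} as stated only gives weak-$*$ convergence of $f_\alpha$ in $H^{-2}_*$, so that upgrading it to norm convergence (as the inequality $\|u_\alpha-\hat u\|_{H^2_*}\le\|f_\alpha-\overline f\|_{H^{-2}_*}$ requires) relies on the uniform $L^1$-bound together with the compactness of $\mathcal M(\overline\Omega)\hookrightarrow H^{-2}_*$, a step the paper leaves implicit. What the paper's route buys is brevity and the fact that it never needs the explicit formulas \eqref{horrible} beyond what was already computed to identify $\overline u$ pointwise.
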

\begin{proof}
By definition, $\|f_\alpha\|_{L^1}=1$, and by Lemma \ref{limitfalpha} $f_\alpha \to \overline f:=\frac{\sin x}{2}\cdot\delta_\ell(y)$ in $H^{-2}_*$. By \eq{lax-milg} with $f=f_\alpha $ and with $f=\overline f$, there exists a (unique) $\hat u\in H^2_*$ such that $(\hat u,v)_{H^2_*}=\langle \overline f,v\rangle$ for all $v\in H^2_*$ and $\|u_\alpha-\hat u\|_{H^2_*}\leq \|f_\alpha-\overline f\|_{H^{-2}_*}\,.$
Hence, $u_\alpha(x,y)\!\to\!\hat u\left(x,y\right)$ in $H^2_*$ and, in turn, $\hat u=\overline u$.\quad$\Box$\end{proof}

The gap function corresponding to $\overline u$ is $\overline \GG (x)= \overline u\left(x,\ell\right)-\overline u\left(x,-\ell\right)=E(\ell)\,\sin x\,,$ where $E(\ell)$ is as in \eq{El}. By Lemma \ref{limitfalpha2}, as $\alpha\to+\infty$, we have
$$\max_{x\in [0,\pi]}|\GG_\alpha\left(x\right)\!-\! \overline \GG (x)|\leq \max_{x\in [0,\pi]}|u_\alpha\left(x,\ell\right)\!-\!\overline u\left(x,\ell\right)|+
\max_{x\in [0,\pi]}|u_\alpha\left(x,-\ell\right)\!-\!\overline u\left(x,-\ell\right)| \to 0,$$
and, by \eq{Gla}, $\GG^{\infty}_{\alpha} \to \overline \GG^{\infty}=E(\ell)$. 
Finally, since
{\small $$B=\frac{e^{\alpha\ell}}{\alpha}\left[\overline B+ \frac{(1-\sigma)\ell \sinh (\ell)-(1+\sigma)\cosh(\ell)}{2\alpha(1\!-\!\sigma)[(3\!+\!\sigma)\cosh(\ell)\sinh(\ell)+(1\!-\!\sigma)\ell]}+o\left(\frac{1}{\alpha} \right)\right] \text{ as } \alpha\to+\infty\,,$$
$$C=-\frac{e^{\alpha\ell}}{\alpha}\left[\overline C+ \frac{\cosh(\ell)}{2\alpha[(3\!+\!\sigma)\cosh(\ell)\sinh(\ell)+(1\!-\!\sigma)\ell]}+o\left(\frac{1}{\alpha} \right)\right] \text{ as } \alpha\to+\infty\,,$$}\!
we get the asymptotic in \eq{Ela}. This concludes the proof of Theorem \ref{gap_limit}.

\section{Proofs of Theorems \ref{gap_eigen} and \ref{particolare}}\label{pttt}

We state some properties of the eigenvalues and eigenfunctions of $\Delta^2$ by
slightly improving Theorem 2.1 and Proposition 2.2 in \cite{noi}. By \eq{iff}, two cases may occur. If
{\small \neweq{iff1}
\tanh(\sqrt{2}m\ell)>\left(\tfrac{\sigma}{2-\sigma}\right)^2\, \sqrt{2}m\ell
\endeq}\!
then $m$ is small and the torsional eigenfunction $w_{m,j}$ with $j\ge1$ is given by
{\small \neweq{explicit}
w_{m,j}(x,y)\!=\!\left[\big[\nu_{m,j}^{1/2}-\!(1\!-\!\sigma)m^2\big] \tfrac{\sinh\Big(y\sqrt{\nu_{m,j}^{1/2}+m^2}\Big)}{\sinh\Big(\ell\sqrt{\nu_{m,j}^{1/2}+m^2}\Big)}
+\big[\nu_{m,j}^{1/2}+\!(1\!-\!\sigma)m^2\big]\tfrac{\sin\Big(y\sqrt{\nu_{m,j}^{1/2}-m^2}\Big)}{\sin\Big(\ell\sqrt{\nu_{m,j}^{1/2}-m^2}\Big)}\right]\sin(mx)
\endeq}\!
where the corresponding eigenvalue $\nu_{m,j}$ is the $j$-th solution $\lambda_j>m^2$ of the equation
{\small \neweq{zeroeq}
\sqrt{\sqrt{\lambda}\!-\!m^2}\big[\sqrt{\lambda}\!+\!(1\!-\!\sigma)m^2\big]^2\tanh(\ell\sqrt{\sqrt{\lambda}\!+\!m^2})\!=
\!\sqrt{\sqrt{\lambda}\!+\!m^2}\big[\sqrt{\lambda}\!-\!(1\!-\!\sigma)m^2\big]^2\tan(\ell\sqrt{\sqrt{\lambda}\!-\!m^2});
\endeq}\!
For any $m\geq 1$ and $j\ge1$ we have $\nu_{m,j}>m^4$ and
$\ell\sqrt{\nu_{m,j}^{1/2}-m^2}/\pi\not\in\mathbb{N}$, so that the functions in \eq{explicit} are well-defined. Related to \eq{zeroeq}, we consider the function
{\small \begin{eqnarray*}
H(s) &:=& \sqrt{s^2\!-\!m^4}\big[s\!-\!(1\!-\!\sigma)m^2\big]^2
\left\{\left(\frac{s\!+\!(1\!-\!\sigma)m^2}{s\!-\!(1\!-\!\sigma)m^2}\right)^2\, \frac{\tanh(\ell\sqrt{s\!+\!m^2})}{\sqrt{s\!+\!m^2}}\!-
\frac{\tan(\ell\sqrt{s\!-\!m^2})}{\sqrt{s\!-\!m^2}}\right\}\\
\ &=:& \sqrt{s^2\!-\!m^4}\big[s\!-\!(1\!-\!\sigma)m^2\big]^2\, Z(s)
\end{eqnarray*}}\!
with $s\neq m^2+\frac{\pi^2}{\ell^2}\left(\frac12 +k\right)^2$, $k\in\N$.
In each of the subintervals of definition for $Z$ (and $s>m^2$), the maps
$s\mapsto\frac{s\!+\!(1\!-\!\sigma)m^2}{s\!-\!(1\!-\!\sigma)m^2}$, $s\mapsto\frac{\tanh(\ell\sqrt{s\!+\!m^2})}{\sqrt{s\!+\!m^2}}$, and
$s\mapsto-\frac{\tan(\ell\sqrt{s\!-\!m^2})}{\sqrt{s\!-\!m^2}}$
are strictly decreasing, the first two being positive.  Since, by \eq{iff1},
$\lim_{s\to m^2}Z(s)=\left(\frac{2-\sigma}\sigma \right)^2\, \frac{\tanh(\sqrt2 \, \ell m)}{\sqrt2 \, m}-\ell>0$, $Z$ starts positive, ends up negative and it is strictly decreasing in any subinterval, it admits exactly one zero there, when $\tan(\ell\sqrt{s\!-\!m^2})$ is positive. Hence, $H$ has exactly
one zero on any interval and we have proved
\neweq{intervals}
\nu_{m,j}=\lambda_j\in\left(\big(m^2+\tfrac{\pi^2}{\ell^2}(j-1)^2\big)^2,\big(m^2+\tfrac{\pi^2}{\ell^2}(j-\tfrac12 )^2\big)^2\right)
\quad\forall j\ge1.
\endeq

Slightly different is the second case. If
{\small \neweq{iff2}
\tanh(\sqrt{2}m\ell)<\left(\tfrac{\sigma}{2-\sigma}\right)^2\, \sqrt{2}m\ell\,,
\endeq}\!
$m$ is large, $(1-\sigma^2)m^4<\nu_{m,1}<m^4$, and the first torsional eigenfunction is
{\small \neweq{explicit2}
w_{m,1}(x,y)\!=\!\left[\big[\nu_{m,1}^{1/2}-\!(1\!-\!\sigma)m^2\big] \tfrac{\sinh\Big(y\sqrt{\nu_{m,1}^{1/2}+m^2}\Big)}{\sinh\Big(\ell\sqrt{\nu_{m,1}^{1/2}+m^2}\Big)}
+\big[\nu_{m,1}^{1/2}+\!(1\!-\!\sigma)m^2\big]\tfrac{\sinh\Big(y\sqrt{m^2-\nu_{m,1}^{1/2}}\Big)}{\sinh\Big(\ell\sqrt{m^2-\nu_{m,1}^{1/2}}\Big)}\right]\sin(mx)\,.
\endeq}\!
For $j\ge2$, $\nu_{m,j}>m^4$ and the eigenfunctions are still given by \eq{explicit}. Now $Z$ has no zero in $(m^4,\big(m^2+\tfrac{\pi^2}{4\ell^2}\big)^2)$.
However, for $j\ge2$, \eq{intervals} still holds.
We may now define
{\small \begin{equation}\label{Cmj}
C_{m,j}(\ell):=\frac{4}{\nu_{m,j}^{1/2}\, \|w_{m,j}\|_{L^1}}\,.
\end{equation}}\!
The solution of \eqref{eq:Poisson} with $f=f_{m,j}$ is $u_{m,j}(x,y)=\frac{w_{m,j}(x,y)}{\nu_{m,j} \|w_{m,j}\|_{L^1} }$. Hence, from \eq{explicit} and \eq{Cmj},
$u_{m,j}(x,\pm\ell)=\pm \frac{C_{m,j}}{2}\, \sin(mx)$ and the gap function is $\GG_{m,j}(x)=C_{m,j}\sin(mx)$ with $\GG^{\infty}_{m,j}=C_{m,j}$. We now prove \eq{asympt} with $j\ge2$. From \eq{explicit},
{\small \begin{eqnarray*}
\frac{\|w_{m,j}\|_{L^1}}{4} \ge
\int_0^\gamma\left[\big[\nu_{m,j}^{1/2}+(1\!-\! \sigma)m^2\big]\, \tfrac{\sin\Big(y\sqrt{\nu_{m,j}^{1/2}-m^2}\Big)}{\Big|\sin\Big(\ell\sqrt{\nu_{m,j}^{1/2}-m^2}\Big)\Big|}-                               \big[\nu_{m,j}^{1/2}-(1\!-\!\sigma)m^2\big]\, \tfrac{\sinh\Big(y\sqrt{\nu_{m,j}^{1/2}+m^2}\Big)}{\sinh\Big(\ell\sqrt{\nu_{m,j}^{1/2}+m^2}\Big)}\right]\!dy
\end{eqnarray*}}\!
where $\gamma=\pi/\sqrt{\nu_{m,j}^{1/2}-m^2}<\ell$ in view of \eq{intervals}. By computing the integrals we get
{\small \begin{eqnarray*}
\frac{\|w_{m,j}\|_{L^1}}{4} \ge
2\tfrac{\nu_{m,j}^{1/2}+(1-\sigma)m^2}{\sqrt{\nu_{m,j}^{1/2}-m^2}}-\tfrac{\nu_{m,j}^{1/2}+(1-\sigma)m^2}{\sqrt{\nu_{m,j}^{1/2}-m^2}}
=\tfrac{\nu_{m,j}^{1/2}+(1-\sigma)m^2}{\sqrt{\nu_{m,j}^{1/2}-m^2}}\, .
\end{eqnarray*}}\!
By using \eq{intervals} several more times, we then infer that
\neweq{dalbasso}
\exists c>0\quad\mbox{s.t.}\quad\nu_{m,j}^{1/2}\|w_{m,j}\|_{L^1}\ge c\, \ell^{-3}\, .
\endeq
Let us prove the converse inequality. From \eq{explicit} we infer that
{\small \begin{eqnarray*}
\frac{\|w_{m,j}\|_{L^1}}{4} \le
\tfrac{\nu_{m,j}^{1/2}-(1-\sigma)m^2}{\sqrt{\nu_{m,j}^{1/2}+m^2}}+
\tfrac{\nu_{m,j}^{1/2}+(1-\sigma)m^2}{\sqrt{\nu_{m,j}^{1/2}-m^2}}\,
\tfrac{2j}{\Big|\sin\Big(\ell\sqrt{\nu_{m,j}^{1/2}-m^2}\Big)\Big|}\, .
\end{eqnarray*}}\!
We claim that there exists $\eta>0:$ $\Big|\sin\Big(\ell\sqrt{\nu_{m,j}^{1/2}-m^2}\Big)\Big|>\eta$ for all $\ell>0$.
If not, up to a subsequence $
\omega(\ell):=\ell\sqrt{\nu_{m,j}^{1/2}-m^2}-\pi(j-1)\to0$ as $\ell\to0$.
By replacing into \eq{zeroeq}, we obtain $\omega(\ell)\approx\tanh(\pi(j-1))$, a contradiction.
With this claim,
$$\frac{\|w_{m,j}\|_{L^1}}{4}\le\tfrac{\nu_{m,j}^{1/2}-(1-\sigma)m^2}{\sqrt{\nu_{m,j}^{1/2}+m^2}}+
\tfrac{2j}{\eta}\, \tfrac{\nu_{m,j}^{1/2}+(1-\sigma)m^2}{\sqrt{\nu_{m,j}^{1/2}-m^2}}\, .$$
By using \eq{intervals} several more times, we then infer that
\neweq{dallalto}
\exists c>0\quad\mbox{s.t.}\quad\nu_{m,j}^{1/2}\|w_{m,j}\|_{L^1}\le c\, \ell^{-3}\, .
\endeq
Finally, by combining \eq{Cmj} with \eq{dalbasso} and \eq{dallalto}, we obtain \eq{asympt} for $j\ge2$. The case $j=1$ is simpler. In both cases \eq{iff1} and \eq{iff2}, $w_{m,1}$ does
not change sign. Therefore, we do not need to restrict to $(0,\gamma)$. The estimates are then similar to the case $j\ge2$
for \eq{dalbasso}. For \eq{dallalto} we proceed as for $j\ge2$: if \eq{iff2} holds, the proof is straightforward
since only hyperbolic functions are involved while if \eq{iff1} holds,
{\small \begin{eqnarray*}
\frac{\|w_{m,1}\|_{L^1}}{4} \le
\tfrac{\nu_{m,1}^{1/2}-(1-\sigma)m^2}{\sqrt{\nu_{m,1}^{1/2}+m^2}}+
\ell\, [\nu_{m,1}^{1/2}+(1-\sigma)m^2]\le c\, \nu_{m,1}^{3/4}
\end{eqnarray*}}\!
and we obtain \eq{dallalto}. This concludes the proof of Theorem \ref{gap_eigen}.\par\medskip\noindent
{\bf Proof of Theorem \ref{particolare}.} The function {\small $\overline{u}(x,y)=\sum_{m\in M}\frac{\alpha_m\, w_{m,j(m)}(x,y)}{\nu_{m,j(m)}\, \|w_{m,j(m)}\|_{L^1}}$}
solves \eqref{eq:Poisson} with $f$
as given in the statement. By the explicit form of the $w_{m,j(m)}$, see \eq{explicit} and \eq{explicit2}, we have
{\small $$\overline{u}(x,\pm\ell)=\pm\sum_{m\in M}\tfrac{\alpha_m\, C_{m,j(m)}}{2}\, \sin(mx)\quad\mbox{and}\quad\GG(x)=\sum_{m\in M}\alpha_m C_{m,j(m)}\, \sin(mx)\,.$$}
In particular, we get
{\small $$\GG^{\infty}\leq\sum_{m\in M}|\alpha_{m}|\, C_{m,j(m)}\,.$$}
Let $N$ be the number of integers in the set $M$. By induction one can prove that if $\{\delta_m\}_{m\in M} $ is a nonincreasing set of positive numbers and
$S_N:=\{\textbf{x}=(x_1,...,x_N)\in \R^N\,:\,x_m>0\, \text{ for }x\in M\text{ and }\sum_{m\in M}x_m\leq 1 \}$, then
{\small $$ \max_{\textbf{x}\in S_N}  \sum_{m\in M}\delta_m x_m= \delta_{m_o}\,.$$}
In our case, $x_m=|\alpha_m|$ and $\delta_m=C_{m,j(m)}$. Then the proof of Theorem \ref{particolare} is completed by noting that the function
$f_{m_o,j(m_o)}$ satisfies the assumption of Theorem \ref{particolare} with $\alpha_{m_o}=1$ and, by Theorem \ref{gap_eigen}, the $L^{\infty}$-norm
of the corresponding gap function is $C_{m_o,j(m_o)}$.
\par\medskip\noindent	
{\small \textbf{Acknowledgments.}
The first and second author are partially supported by the Research Project FIR (Futuro in Ricerca) 2013 \emph{Geometrical and
qualitative aspects of PDE's}. The third author is partially supported by the PRIN project {\em Equazioni alle derivate parziali di tipo ellittico e parabolico:
aspetti geometrici, disuguaglianze collegate, e applicazioni}. The three authors are members of the Gruppo Nazionale per l'Analisi Matematica, la Probabilit\`a
e le loro Applicazioni (GNAMPA) of the Istituto Nazionale di Alta Matematica (INdAM).
}

\par


\end{document}